\newtheorem{theorem}{Theorem}[section]
\newtheorem{lemma}[theorem]{Lemma}
\newtheorem{defn}[theorem]{Definition}
\newcommand{\R}{\mathbb{R}}
\newcommand{\C}{\mathbb{C}}
\newcommand\widecheck[1]{%
\savestack{\tmpbox}{\stretchto{%
  \scaleto{%
    \scalerel*[\widthof{\ensuremath{#1}}]{\kern-.6pt\bigwedge\kern-.6pt}%
    {\rule[-\textheight/2]{1ex}{\textheight}}
  }{\textheight}%
}{0.5ex}}%
\stackon[1pt]{#1}{\scalebox{-1}{\tmpbox}}%
}
\begin{document}
\linespread{1.5}
\title{Harnack Inequality for Magnetic Graphs}
\author{Sawyer Jack Robertson}
\date{\today}
\begin{abstract}
    For magnetic graphs satisfying connection curvature dimension inequality $CD^\sigma(n,\kappa)$, we prove a Harnack-type inequality for eigenfunctions of the graph magnetic Laplace operator in the manner of work done by Chung, Lin, Yau in 2014. Then we look at two applications; first a lower bound for the least eigenvalue in terms of curvature and extremal path/degree quantities, then to the magnetic Cheeger number of the graph.
\end{abstract}

\maketitle

\section{Introduction \& Preliminaries}

\subsection{Classical structures}
We consider a graph of the form $G=(V,E,p)$ where $V$ is the finite set of vertices, $E$ a set of undirected edges, and $p$ is an edge weight function. The edges are undirected, and we assume no loops (i.e., edges of the form $\{x,x\}$) or multiple edges. The weight of an edge $xy$ is denoted $p_{xy}$ and is assumed nonnegative and symmetric; the degree of a vertex $x$, denoted $d_x$, is the sum of all weights of edges incident to $x$. Adjacency between vertices $x,y$ is denoted $x\sim y$, and to avoid trivial complications, we require no isolated vertices (i.e., $d_x>0$ for each $x\in V$). \par
The \textit{oriented edge set} is the set of all pairs of adjacent vertices:
    \[E^{\text{or}}:=\{(x,y),(y,x):x,y\in V,\hspace{0.15cm}x\sim y\}.\]
Letting $V^{\C}$ and $V^\R$ denote the vector spaces of complex- and real-valued functions defined on $V$, respectively, the \textit{graph Laplace operator} $\Delta:V^\C\rightarrow V^\C$ is defined by
    \begin{equation}\label{laplacian-defn}
        (\Delta f)(x):=\frac{1}{d_x}\sum_{y\sim x}p_{xy}(f(y)-f(x)).
    \end{equation}
A nonzero function $f\in V^\C$ is said to be a \textit{harmonic eigenfunction} of $-\Delta$ with eigenvalue $\lambda$ provided $(-\Delta)f=\lambda f$. Note that $-\Delta$ will have nonnegative real eigenvalues since it is Hermitian and positive semidefinite. \par
To define Ricci curvature of a graph, we follow in the convention of \cite{LY-10} as adapted from work by Bakry, Emery\cite{BE-85}. This is done by defining a \textit{first curvature operator}, which is the billinear operator defined $\Gamma:V^\C\times V^\C\rightarrow V^\C$ by the formula
    \[2\Gamma(f,g):=\Delta(f\overline{g})-f\overline{\Delta g}-\Delta f\overline{g},\]
which may also be expressed pointwise by
    \begin{equation}\label{firstcurve-expr}
    2\left[\Gamma(f,g)\right](x)=\frac{1}{d_x}\sum_{y\sim x}p_{xy}(f(y)-f(x))\overline{(g(y)-g(x))}.
    \end{equation}
Note the conjugate symmetry of the operator. Setting for convenience $\Gamma(f):=\Gamma(f,f)$ we can define $|\nabla f|^2(x)$ via
    \begin{equation}\label{energy-defn}
        2\left[\Gamma(f)\right](x):=\frac{1}{d_x}\sum_{y\sim x}p_{xy}|f(y)-f(x)|^2:=|\nabla f|^2(x).
    \end{equation}
This will be called the \textit{energy} of $f$ at $x$. The \textit{Ricci curvature operator}, as in \cite{BE-85,CLY-14,LY-10}, is defined by iterating the first curvature operator; $\Gamma_2:V^\C\times V^\C\rightarrow V^\C$ is in turn defined by
    \begin{equation}\label{gamma2-defn}
        2\Gamma_2(f,g):=\Delta\Gamma(f,g)-\Gamma(f,\Delta g)-\Gamma(\Delta f, g).
    \end{equation}
A function $f\in V^\C$ is said to satisfy the curvature-dimension type inequality $CD(n,\kappa)$ for $n\in(1,\infty)$ and $\kappa\in\R\backslash\{0\}$ if
    \begin{equation}\label{CD-classical-ineq}
        \Gamma_2(f,f)\geq \frac{1}{n}|\Delta f|^2+\kappa\Gamma(f,f)
    \end{equation}
at each vertex $x\in V$. If this holds for every $f\in V^\C$ then we say that $G$ satisfies $CD(n,\kappa)$. If $CD(n,\kappa)$ holds for each $f$ in some class of functions $S\subset V^\C$, then we say $G$ satisfies the $S$-weak curvature dimension inequality $CD(n,\kappa,S)$.

\subsection{Magnetic structures}
In general, one can consider the analysis of functions $f:V\rightarrow\mathbb{F}^d$ where $\mathbb{F}=\R$ or $\C$ and $d\geq 1$ is some desired dimension; in turn, a \textit{graph connection} or \textit{signature} is a map $\sigma:E^{\text{or}}\rightarrow O_d$ where $O_d$ is the orthogonal group of appropriate dimension, satisfying $\sigma_{yx}=\sigma_{xy}^{-1}$. In this paper, we will restrict the scope to the case where $d=1$ and the signature $\sigma$ takes values in a cyclic group $S^1_\ell:=\{z\in\mathbb{C}:z^\ell=1\}$ for some $\ell\geq 1$; pairs $(G,\sigma)$ of this form are often called \textit{magnetic graphs}. A signature $\sigma$ taking values in $S^1_\ell$ will be called \textit{entire} if its range generates all of $S^1_\ell$; equivalently, if its range does not lie within a proper subgroup of $S^1_\ell$. The \textit{magnetic girth} of a magnetic graph, denoted $g^\sigma$, is defined to be the smallest directed cycle with the property that the product of the signature values along the edges of the cycle generates the group $S^1_\ell$. If the signature of a magnetic graph is not entire, or if no such cycle exists, $g^\sigma:=\infty$. A magnetic graph $(G,\sigma)$ is called \textit{balanced} provided that the product of the values of the signature along any (directed) cycle is 1; otherwise, $G$ is \textit{unbalanced}.\par
We define the \textit{magnetic Laplace operator} by $\Delta^\sigma:V^\C\rightarrow V^\C$ via
    \begin{equation}\label{maglaplace-defn}
        (\Delta^\sigma)f(x)=\frac{1}{d_x}\sum_{y\sim x}p_{xy}(\sigma_{xy}f(y)-f(x)),
    \end{equation}
As before, $-\Delta^\sigma$ will have nonnegative real eigenvalues. We can use the magnetic Laplace operator to approach the notion of Ricci curvature in the same manner as before, but taking into account the signature structure, following \cite{LMP-19}. The \textit{first magnetic curvature operator} $\Gamma^\sigma$ is the billinear operator defined in the one-dimensional case by
    \begin{equation}\label{conn-curve-defn}
        2\Gamma^\sigma(f,g):=\Delta^\sigma(f\overline{g})-f\left(\overline{\Delta^\sigma g}\right)-\left(\Delta^\sigma f\right)\overline{g},
    \end{equation}
with the \textit{magnetic Ricci curvature operator} given by
    \begin{equation}\label{2-conn-curve-defn}
        2\Gamma^\sigma_2(f,g):=\Delta\Gamma^\sigma(f,g)-\Gamma^\sigma(f,\Delta^\sigma g)-\Gamma^\sigma(\Delta^\sigma f, g).
    \end{equation}
In higher dimensions, Liu, et al. \cite[Eq. 1.17]{LMP-19} give more general definitions of the magnetic curvature; this is done in a natural manner by taking vector, rather than scalar, products.\par
Setting for convenience $\Gamma^\sigma(f):=\Gamma^\sigma(f,f)$ we can define $|\nabla^\sigma f|^2(x)$ via
    \begin{equation}\label{mag-energy-defn}
        2\left[\Gamma^\sigma(f)\right](x):=\frac{1}{d_x}\sum_{y\sim x}p_{xy}|\sigma_{xy}f(y)-f(x)|^2:=|\nabla^\sigma f|^2(x).
    \end{equation}
$f\in V^\C$ is said to satisfy the magnetic curvature-dimension type inequality $CD^\sigma(n,\kappa)$ for\break 
$n\in(1,\infty)$ and $\kappa\in\R\backslash\{0\}$ if
    \begin{equation}\label{CD-sigma-ineq}
        \Gamma_2^\sigma(f,f)\geq \frac{1}{n}|\Delta^\sigma f|^2+\kappa\Gamma^\sigma(f,f).
    \end{equation}
$(G,\sigma)$ is said to satisfy $CD^\sigma(n,\kappa)$ if the preceding holds for each $f\in V^\C$. \par

If $(G,\sigma)$ is a magnetic graph, and $\sigma$ takes values in a finite cyclic group $S^1_\ell$, we can define an associated combinatorial graph $\widehat{G}=(\widehat{V},\widehat{E},p)$ called the \textit{lift} or \textit{covering} graph, whose vertices are given by $\widehat{V}=V\times S^1_\ell$, and whose edges are defined via the relation
    \[(x_1,\xi_1)\sim(x_2,\xi_2)\iff x_1\sim x_2\text{ and }\xi_2=\xi_1\sigma_{x_1x_2},\]
with $p_{(x_1,\xi_1),(x_2,\xi_2)}:=p_{x_1x_2}$. We also define the vector space of functions $\widehat{V}^\C$ consisting of those defined on the vertices of the lift and taking complex values. When the context has fixed a magnetic graph $(G,\sigma)$, the Laplacian of its lift is denoted $\widehat{\Delta}$. \par
Before proceeding it will be advantageous to collect a few straightforward facts about how properties and structures of a magnetic graph relate to those on the lift.

\begin{lemma}\label{liftdiameter}
    Suppose $(G,\sigma)$ is an connected unbalanced magnetic graph with an entire signature $\sigma$ taking values in $S^1_\ell$. Suppose $G$ has diameter $D$, magnetic girth $g^\sigma<\infty$, and $\widehat{G}$ has diameter $\widehat{D}$. Then it holds
        \[\widehat{D}\leq 2D +\ell g^\sigma.\]
\end{lemma}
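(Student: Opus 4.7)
The plan is to show that any two lift vertices $(x,\xi),(y,\eta)\in\widehat{V}$ are joined by a walk in $\widehat{G}$ of length at most $2D+\ell g^\sigma$, by constructing a walk in $G$ from $x$ to $y$ whose signature product is exactly $\eta\xi^{-1}$, and then lifting it. The essential fact to use is that if a walk $W$ in $G$ from $u$ to $v$ has signature product $\sigma(W)\in S^1_\ell$, then its lift starting at $(u,\zeta)$ ends at $(v,\zeta\cdot\sigma(W))$; this follows directly from the edge relation defining $\widehat{G}$. The length of the lifted walk equals the length of $W$.

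The construction proceeds in three concatenated stages. Fix a directed cycle $C$ achieving the magnetic girth, of length $g^\sigma$, and pick any base vertex $z$ on $C$; denote by $\sigma_C\in S^1_\ell$ the signature product around $C$ based at $z$, which generates $S^1_\ell$ by the definition of magnetic girth (and because $S^1_\ell$ is abelian, this product is independent of the choice of base vertex on $C$). Let $P_1$ be a shortest path in $G$ from $x$ to $z$, and $P_2$ a shortest path in $G$ from $z$ to $y$; both have length at most $D$. Let $\tau:=\sigma(P_1)^{-1}\,\eta\xi^{-1}\,\sigma(P_2)^{-1}\in S^1_\ell$, and choose $k\in\{0,1,\dots,\ell-1\}$ with $\sigma_C^k=\tau$, which is possible because $\sigma_C$ generates the group. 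Then the walk $W:=P_1\cdot C^k\cdot P_2$ from $x$ to $y$ has signature product
\[
\sigma(W)=\sigma(P_1)\,\sigma_C^k\,\sigma(P_2)=\eta\xi^{-1},
\]
so its lift beginning at $(x,\xi)$ terminates at $(y,\eta)$.

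Summing the lengths of the three pieces yields
\[
|W|\;=\;|P_1|+k\,g^\sigma+|P_2|\;\leq\;D+(\ell-1)g^\sigma+D\;\leq\;2D+\ell g^\sigma,
\]
so $d_{\widehat{G}}((x,\xi),(y,\eta))\leq 2D+\ell g^\sigma$. Taking the supremum over all pairs of lift vertices gives $\widehat{D}\leq 2D+\ell g^\sigma$.

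I do not anticipate a genuine obstacle here; the argument is essentially a bookkeeping exercise once one recognizes that the second coordinate in $\widehat{V}$ is governed by the signature product along lifted walks. The only point requiring mild care is verifying that $\sigma_C$ indeed generates $S^1_\ell$ (so that every element of the group arises as some $\sigma_C^k$), which is immediate from the definition of $g^\sigma$ together with the entireness hypothesis on $\sigma$, and noting that the product around $C$ is base-point independent in the abelian group $S^1_\ell$.
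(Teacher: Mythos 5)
Your proposal is correct and follows essentially the same route as the paper's proof: concatenate two shortest paths through a base vertex of a girth-realizing cycle with an appropriate power of that cycle to force the signature product to equal $\eta\xi^{-1}$, then lift. Your version is marginally more explicit in bounding the exponent by $\ell-1$ and in noting base-point independence of the cycle's signature product, but the argument is the same.
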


\begin{proof}
    As a preliminary note, suppose one has an oriented path in $G$ of length $n$ expressed as an ordered list of vertices $P=(x_0,x_1,\dots, x_{n})$. By the lift of $P$ initiating at level $\xi_0\in S^1_\ell$, which we denote $\widehat{P}_{\xi_0}$, we mean the path in $\widehat{G}$ given by the ordered list of vertices $$\widehat{P}_{\xi_0}:=\left((x_0,\xi_0),(x_1,\xi_0\sigma_{x_0x_1}),\dots (x_n,\xi_0\prod_{i=0}^{n-1}\sigma_{x_ix_{i+1}})\right).$$
    Now suppose one has two distinct vertices in $\widehat{G}$, say, $(y_1,\xi_1),(y_2,\xi_2)\in \widehat{V}$. Let $C$ be a directed cycle in $G$ realizing the magnetic girth of $G$, with signature product $\omega$ and length $g^\sigma$, containing the vertex $y^\ast$. Construct a directed path $P_1$ in $G$ connecting $y_1$ to $y^\ast$, and another directed path $P_2$ connecting $y^\ast$ to $y_2$. Let $\omega_1,\omega_2$ denote the products of the signature values along the paths $P_1,P_2$ resp.. Since $\sigma$ is entire, find an integer $m\geq 0$ for which $\omega^m=\xi_2\xi_1^{-1}\omega_1^{-1}\omega_2^{-1}$. Now form a path $P^\ast$ by concatenating in order $P_1$, then $m$ copies of $C$, followed by $P_2$. The product of the signature values along this path is $\xi_2\xi_1^{-1}$ by design, and it has length at most $2D+g^\sigma \ell$. $\widehat{P}^\ast_{\xi_1}$ connects $(y_1,\xi_1)$ and $(y_2,\xi_2)$ as desired.
\end{proof}

Suppose $G$ is a cycle on $2n$ vertices, with signature equal to $1$ everywhere except a single edge where it is equal to a primitive root in $S^1_\ell$. Then in this example $g^\sigma=2n$, $D=n$, so Lemma \ref{liftdiameter} supplies the estimate $\widehat{D}\leq 2n+n\ell$. One computes directly that $\widehat{D}=(2n)(\ell/2)=n\ell$, so the preceding estimate is sharp in its highest order terms.

\begin{lemma}
    Suppose $(G,\sigma)$ is an unbalanced magnetic graph, with an entire signature $\sigma$ taking values in a finite cyclic group $S^1_\ell$. Then $\widehat{G}$ is connected.
\end{lemma}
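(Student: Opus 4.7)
\noindent\textit{Proof plan.}
The plan is to observe that the construction in the proof of Lemma \ref{liftdiameter} already produces, for any pair of vertices in $\widehat{V}$, a finite walk joining them in $\widehat{G}$; the present lemma is then an immediate corollary. Explicitly, given $(y_1,\xi_1),(y_2,\xi_2)\in\widehat{V}$, I would select a directed cycle $C$ in $G$ whose signature product $\omega$ generates $S^1_\ell$, fix a vertex $y^\ast$ on $C$, use connectedness of $G$ to pick directed paths $P_1$ from $y_1$ to $y^\ast$ and $P_2$ from $y^\ast$ to $y_2$ with signature products $\omega_1,\omega_2$, choose $m\geq 0$ such that $\omega^m=\xi_2\xi_1^{-1}\omega_1^{-1}\omega_2^{-1}$, and concatenate $P_1$, $m$ traversals of $C$, and $P_2$ into a walk $P^\ast$ in $G$ whose total signature product equals $\xi_2\xi_1^{-1}$. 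The lift $\widehat{P}^\ast_{\xi_1}$ is then a walk in $\widehat{G}$ from $(y_1,\xi_1)$ to $(y_2,\xi_2)$, which is exactly what is needed.

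The one nontrivial step — and the expected main obstacle — is verifying that the hypotheses here force the existence of such a cycle $C$, i.e.\ that $g^\sigma<\infty$ so that the construction above applies. My intended route is to work at the level of the subgroup $H\leq S^1_\ell$ consisting of signature products of closed walks based at a fixed vertex: unbalancedness ensures $H$ is nontrivial, and entirety of $\sigma$ together with a spanning-tree plus basic-cycle decomposition of $G$ should allow one to recognize generators of $S^1_\ell$ inside $H$, perhaps only after forming compound closed walks by concatenating several basic cycles rather than a single simple cycle. The technical heart of the argument is therefore to convert the statement that the \emph{range} of $\sigma$ generates $S^1_\ell$ into the statement that there is a \emph{single} closed walk whose signature product generates $S^1_\ell$; once such a generating closed walk is exhibited (playing the role of $C$ above), the first paragraph goes through verbatim, since the construction in Lemma \ref{liftdiameter} never uses simplicity of $C$, only that its signature product generates $S^1_\ell$.
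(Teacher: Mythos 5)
Your first paragraph is a sound reduction, and it is essentially the only reasonable route: since the set $H$ of signature products of closed walks based at a fixed vertex is a subgroup of the cyclic group $S^1_\ell$, connectivity of $\widehat{G}$ is \emph{equivalent} to $H=S^1_\ell$, which for a cyclic group is equivalent to the existence of a single closed walk whose product generates; and once such a walk is in hand, the concatenation construction from Lemma \ref{liftdiameter} (which, as you note, never uses simplicity of $C$) finishes the job. The paper itself gives no proof of this lemma, so there is nothing to compare your first paragraph against except the construction you are already reusing.

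The gap is exactly at the step you flag as the technical heart, and it cannot be closed: entirety of $\sigma$ together with unbalancedness does \emph{not} imply $H=S^1_\ell$. The holonomy group $H$ is generated by the fundamental-cycle products relative to a spanning tree, and the values of $\sigma$ on tree edges are invisible to it, so the range of $\sigma$ generating $S^1_\ell$ says nothing about $H$. Concretely, take $\ell=4$ and let $G$ be a triangle on $\{x,y,z\}$ with $\sigma_{xy}=\sigma_{yz}=i$ and $\sigma_{zx}=1$. The range of $\sigma$ contains $i$, so $\sigma$ is entire; the unique cycle has product $i\cdot i\cdot 1=-1\neq 1$, so $(G,\sigma)$ is unbalanced; yet $H=\{1,-1\}$, and one checks directly that the component of $(x,1)$ in $\widehat{G}$ is $\{(x,\pm 1),(z,\pm 1),(y,\pm i)\}$, so $\widehat{G}$ splits into two components of six vertices each. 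Thus the lemma as stated is false, and no argument can convert ``the range of $\sigma$ generates $S^1_\ell$'' into ``some closed walk has generating signature product.'' The statement becomes true, and your first paragraph then proves it verbatim, if one strengthens the hypothesis to what is actually needed: that the closed-walk signature products generate $S^1_\ell$ (for instance, $g^\sigma<\infty$ as in Lemma \ref{liftdiameter}). Your difficulty here is therefore a defect of the statement rather than of your argument, but as a proof of the lemma as written the proposal cannot be completed.
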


The verification of this lemma is straightforward, it being worthwhile to note that the condition on the range of the signature is more cosmetic than substantive; though a formality, dropping it can lead to disconnected lifts associated with unbalanced magnetic graphs.\par 
A useful tool is the \textit{lift embedding transformation}\hspace{0.25cm} $\widehat{  }:V^\C\rightarrow\widehat{V}^\C$ defined by $\widehat{f}(x,\xi):=\xi f(x)$. The image of the lift embedding transformation will be denoted $W\subset \widehat{V}^\C$.
    \begin{lemma}\label{lift-vs-magnetic-quantities}
        Suppose $(G,\sigma)$ is a magnetic graph. For each $f\in V^\C$ it holds
            \begin{enumerate}[label=(\roman*)]
                \item $|\nabla \widehat{f}|^2(x,\xi)=|\nabla^\sigma f|^2(x)$
                \item $(\widehat{\Delta}\widehat{f})(x,\xi)=\xi(\Delta^\sigma f)(x)$.
            \end{enumerate}
    \end{lemma}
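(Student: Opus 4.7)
The plan is to prove both (i) and (ii) by direct unpacking of the definitions, using two central facts: every neighbor of $(x,\xi)$ in the lift $\widehat{G}$ is of the form $(y,\xi\sigma_{xy})$ with $y\sim x$ (and the lifted edge weight is $p_{xy}$), and $\xi\in S^1_\ell$ has $|\xi|=1$. Both the lift geometry and the definition of $\widehat{f}$ are set up precisely so that the factor $\xi$ commutes out of each sum.

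For (ii), I would start from equation \eqref{laplacian-defn} applied on $\widehat{G}$, writing
\[(\widehat{\Delta}\widehat{f})(x,\xi)=\frac{1}{d_x}\sum_{(y,\eta)\sim(x,\xi)} p_{xy}\bigl(\widehat{f}(y,\eta)-\widehat{f}(x,\xi)\bigr).\]
Using the adjacency relation in the lift, the sum runs over $y\sim x$ with $\eta=\xi\sigma_{xy}$, so $\widehat{f}(y,\eta)=\xi\sigma_{xy}f(y)$ and $\widehat{f}(x,\xi)=\xi f(x)$. Factoring $\xi$ out yields
\[(\widehat{\Delta}\widehat{f})(x,\xi)=\xi\cdot\frac{1}{d_x}\sum_{y\sim x} p_{xy}\bigl(\sigma_{xy}f(y)-f(x)\bigr)=\xi(\Delta^\sigma f)(x),\]
by \eqref{maglaplace-defn}.

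For (i), the calculation is analogous but now uses \eqref{energy-defn} on $\widehat{G}$ and \eqref{mag-energy-defn} on $G$:
\[|\nabla\widehat{f}|^2(x,\xi)=\frac{1}{d_x}\sum_{(y,\eta)\sim(x,\xi)} p_{xy}\bigl|\widehat{f}(y,\eta)-\widehat{f}(x,\xi)\bigr|^2=\frac{1}{d_x}\sum_{y\sim x}p_{xy}\bigl|\xi\sigma_{xy}f(y)-\xi f(x)\bigr|^2.\]
Pulling out $|\xi|^2=1$ from each summand collapses this to $|\nabla^\sigma f|^2(x)$.

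There is no real obstacle here; the lemma is essentially a bookkeeping statement that the lift embedding intertwines the two Laplacians up to the phase factor $\xi$, and that the energy is phase-invariant. The only point worth flagging is being careful that the degree on the lift side is genuinely $d_x$ (not some multiple of it), which follows because each neighbor $y$ of $x$ in $G$ contributes exactly one neighbor $(y,\xi\sigma_{xy})$ of $(x,\xi)$ in $\widehat{G}$, with the same weight $p_{xy}$.
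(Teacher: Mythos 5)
Your proof is correct, and since the paper leaves this lemma as a routine verification without writing out a proof, your direct computation is exactly the intended argument: each neighbor of $(x,\xi)$ in $\widehat{G}$ is $(y,\xi\sigma_{xy})$ with weight $p_{xy}$, so the lift degree is $d_x$, the phase $\xi$ factors out of the Laplacian sum, and $|\xi|^2=1$ makes the energy sums coincide. Nothing is missing.
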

In \cite[3.7]{LMP-19}, the authors obtained the following relationship between the $CD(n,\kappa)$ inequality for a covering graph and the $CD^\sigma(n,\kappa)$ inequality for the original connection graph. We will rephrase here, adapted to this terminology, for completeness:
    \begin{lemma}\label{weakercurvdimrel}
        Let $(G,\sigma)$ be a magnetic graph. If $\widehat{G}$ satisfies $CD(n,\kappa)$ then $G$ satisfies $CD^\sigma(n,\kappa)$.
    \end{lemma}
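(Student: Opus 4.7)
The plan is to lift everything through the transformation $\widehat{\cdot}:V^\C\to\widehat{V}^\C$ and apply the hypothesized $CD(n,\kappa)$ at a point $(x,\xi)\in\widehat{V}$, then translate back to a statement about $f$ at $x$. Concretely, fix $f\in V^\C$ and pick an arbitrary vertex $x\in V$ with an arbitrary $\xi\in S^1_\ell$. The end goal is to recognize that when $CD(n,\kappa)$ is applied to $\widehat{f}$ at $(x,\xi)$, each of the three quantities appearing there coincides with its magnetic counterpart at $x$.

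First I would establish a bilinear strengthening of Lemma \ref{lift-vs-magnetic-quantities}(i), namely
\[
    \widehat{\Gamma}(\widehat{f},\widehat{g})(x,\xi) \;=\; \Gamma^\sigma(f,g)(x),
\]
for all $f,g\in V^\C$. This is a short direct computation: for each $y\sim x$ the unique lift neighbor of $(x,\xi)$ over $y$ is $(y,\xi\sigma_{xy})$, so $\widehat{f}(y,\xi\sigma_{xy})-\widehat{f}(x,\xi)=\xi(\sigma_{xy}f(y)-f(x))$, and likewise for $g$; the $\xi\overline{\xi}=1$ cancellation produces exactly the summand defining $\Gamma^\sigma$. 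In particular $\widehat{\Gamma}(\widehat{f},\widehat{f})(x,\xi)=\Gamma^\sigma(f,f)(x)$ and is independent of $\xi$.

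Next I would observe, via Lemma \ref{lift-vs-magnetic-quantities}(ii), that $\widehat{\Delta}\widehat{f}=\widehat{\Delta^\sigma f}$ as functions on $\widehat{V}$, and that $|\widehat{\Delta}\widehat{f}(x,\xi)|^2=|\Delta^\sigma f(x)|^2$ since $|\xi|=1$. Using the bilinear identity just proved, this gives
\[
    \widehat{\Gamma}(\widehat{f},\widehat{\Delta}\widehat{f})(x,\xi)=\widehat{\Gamma}(\widehat{f},\widehat{\Delta^\sigma f})(x,\xi)=\Gamma^\sigma(f,\Delta^\sigma f)(x),
\]
and symmetrically for the other ordering. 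Moreover, since $h(x,\xi):=\widehat{\Gamma}(\widehat{f},\widehat{f})(x,\xi)$ depends only on $x$ (call its value $H(x):=\Gamma^\sigma(f,f)(x)$), a one-line calculation from the definition of $\widehat{\Delta}$ shows $\widehat{\Delta}h(x,\xi)=\Delta H(x)=\Delta\Gamma^\sigma(f,f)(x)$, because each lift-neighbor of $(x,\xi)$ sits over a unique $y\sim x$ with the same edge weight $p_{xy}$.

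Assembling these three identities in (\ref{gamma2-defn}) applied to $\widehat{f}$ on $\widehat{G}$ yields $\widehat{\Gamma}_2(\widehat{f},\widehat{f})(x,\xi)=\Gamma^\sigma_2(f,f)(x)$. Now invoking the hypothesis $CD(n,\kappa)$ on $\widehat{G}$ at the point $(x,\xi)$ with the test function $\widehat{f}$ gives
\[
    \Gamma^\sigma_2(f,f)(x) \;=\; \widehat{\Gamma}_2(\widehat{f},\widehat{f})(x,\xi) \;\geq\; \tfrac{1}{n}|\widehat{\Delta}\widehat{f}(x,\xi)|^2+\kappa\,\widehat{\Gamma}(\widehat{f},\widehat{f})(x,\xi) \;=\; \tfrac{1}{n}|\Delta^\sigma f(x)|^2+\kappa\,\Gamma^\sigma(f,f)(x),
\]
which is exactly (\ref{CD-sigma-ineq}) at $x$. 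Since $x\in V$ and $f\in V^\C$ were arbitrary, $(G,\sigma)$ satisfies $CD^\sigma(n,\kappa)$. The only mildly delicate point in the argument is confirming that $\widehat{\Gamma}(\widehat{f},\widehat{f})$ is $\xi$-independent so that $\widehat{\Delta}$ of it descends cleanly to $\Delta$ of $\Gamma^\sigma(f,f)$; everything else is algebraic cancellation of unit-modulus factors.
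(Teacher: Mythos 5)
Your proof is correct and is the natural argument: the paper itself does not prove this lemma (it imports it from \cite{LMP-19}), but your identity-matching through the lift embedding is exactly the mechanism the paper invokes for the companion Lemmas \ref{localliftofcd} and \ref{weakliftofcd}, and your computation in fact establishes all three statements at once, since it shows $\widehat{\Gamma}_2(\widehat{f},\widehat{f})(x,\xi)=\Gamma_2^\sigma(f,f)(x)$ with both sides of the curvature-dimension inequality matching pointwise. One caveat: your bilinear identity $\widehat{\Gamma}(\widehat{f},\widehat{g})(x,\xi)=\Gamma^\sigma(f,g)(x)$ presumes that the first term of \eqref{conn-curve-defn} is $\Delta(f\overline{g})$ rather than the printed $\Delta^\sigma(f\overline{g})$; this is the reading forced by \eqref{mag-energy-defn} (and the one used in \cite{LMP-19}, where $f\overline{g}$ is a genuine function and carries no connection), so the discrepancy lies in the paper's displayed definition, not in your argument.
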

We have the following two lemmas giving a partial converse to Lemma \ref{weakercurvdimrel}.
    \begin{lemma}\label{localliftofcd}
        Let $(G,\sigma)$ be a magnetic graph. If $f\in V^\C$ satisfies $CD^\sigma(n,\kappa)$ then $\widehat{f}$ satisfies $CD(n,\kappa)$.
    \end{lemma}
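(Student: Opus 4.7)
The plan is to establish, for each $(x,\xi) \in \widehat{V}$, the three pointwise identities
$$\Gamma(\widehat{f})(x,\xi) = \Gamma^\sigma(f)(x), \qquad |\widehat{\Delta}\widehat{f}|^2(x,\xi) = |\Delta^\sigma f|^2(x), \qquad \Gamma_2(\widehat{f})(x,\xi) = \Gamma_2^\sigma(f)(x).$$
Granting these, the $CD^\sigma(n,\kappa)$ inequality for $f$ at $x$ transcribes term-for-term into the $CD(n,\kappa)$ inequality for $\widehat{f}$ at $(x,\xi)$, which then gives the lemma since $(x,\xi)$ is arbitrary.

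The first two identities are immediate consequences of Lemma \ref{lift-vs-magnetic-quantities}, the second also using $|\xi|=1$. For the third, I would substitute the definitions (\ref{gamma2-defn}) and (\ref{2-conn-curve-defn}) and verify the two constituent equalities $\widehat{\Delta}\Gamma(\widehat{f})(x,\xi) = \Delta\Gamma^\sigma(f)(x)$ and $\Gamma(\widehat{f}, \widehat{\Delta}\widehat{f})(x,\xi) = \Gamma^\sigma(f, \Delta^\sigma f)(x)$ separately. For the first, the key observation is that $\Gamma(\widehat{f})$ is independent of $\xi$ (by part (i) of Lemma \ref{lift-vs-magnetic-quantities}), hence is the pullback of $\Gamma^\sigma(f)$ from $V$ to $\widehat{V}$; combined with the fact that neighbors of $(x,\xi)$ in $\widehat{G}$ biject onto neighbors of $x$ in $G$, applying $\widehat{\Delta}$ collapses directly to $\Delta$ on $V$ acting on $\Gamma^\sigma(f)$.

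For the second equality I would expand the defining sum in (\ref{firstcurve-expr}) at $(x,\xi)$. A neighbor of $(x,\xi)$ has the form $(y,\xi\sigma_{xy})$ with $y \sim x$; using the lift embedding together with part (ii) of Lemma \ref{lift-vs-magnetic-quantities}, each summand factors as
$$\xi(\sigma_{xy}f(y)-f(x)) \cdot \overline{\xi(\sigma_{xy}\Delta^\sigma f(y) - \Delta^\sigma f(x))},$$
and the $|\xi|^2 = 1$ factor cancels, leaving the corresponding summand of $2\Gamma^\sigma(f,\Delta^\sigma f)(x)$. The companion equality $\Gamma(\widehat{\Delta}\widehat{f}, \widehat{f})(x,\xi) = \Gamma^\sigma(\Delta^\sigma f, f)(x)$ follows identically. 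Assembling these via (\ref{gamma2-defn}) and (\ref{2-conn-curve-defn}) yields the $\Gamma_2$ identity.

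The principal obstacle is cosmetic rather than substantive: careful bookkeeping of the unimodular $\xi$-factors introduced by the lift embedding. Because $|\xi|=1$, these factors cancel cleanly in every energy-style computation, and no new inequality or estimate is needed beyond the assumed $CD^\sigma(n,\kappa)$ bound for $f$. Notice that this argument is genuinely local in $f$ and does not require the hypothesis $CD^\sigma(n,\kappa)$ to hold for \emph{every} $f \in V^\C$, which is precisely why Lemma \ref{localliftofcd} strengthens one direction of Lemma \ref{weakercurvdimrel}.
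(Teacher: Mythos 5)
Your proposal is correct and is exactly the ``routine'' computation the paper has in mind (the paper omits the details, citing only Lemma \ref{lift-vs-magnetic-quantities}): the key points --- that $\widehat{\Delta}\widehat{f}=\widehat{\Delta^\sigma f}$, that $\Gamma(\widehat{f})$ is the pullback of $\Gamma^\sigma(f)$, and that neighbors of $(x,\xi)$ biject onto neighbors of $x$ with the same weights so all unimodular $\xi$-factors cancel --- are all handled properly. Your closing remark about the pointwise nature of the hypothesis is also exactly why the paper states Lemma \ref{weakliftofcd} only as a $W$-weak curvature dimension inequality.
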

    \begin{lemma}\label{weakliftofcd}
        Let $(G,\sigma)$ be a magnetic graph. If $G$ satisfies $CD^\sigma(n,\kappa)$, then $\widehat{G}$ satisfies the $W$-weak curvature dimension inequality $CD(n,\kappa,W)$.
    \end{lemma}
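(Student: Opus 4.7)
The plan is to observe that this lemma is essentially a formal consequence of Lemma \ref{localliftofcd} combined with the definition of the $W$-weak curvature-dimension condition, so no substantive new computation is required.

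First I would unwind the definitions. Recall that $W = \{\widehat{f} : f \in V^\C\} \subset \widehat{V}^\C$ is precisely the image of the lift embedding transformation. To establish $CD(n,\kappa,W)$ for $\widehat{G}$, by the definition of the $S$-weak curvature dimension condition given in the introduction (with $S = W$), it suffices to show that every $g \in W$ satisfies inequality \eqref{CD-classical-ineq} pointwise on $\widehat{V}$, with $\Delta$, $\Gamma$, $\Gamma_2$ replaced by the corresponding operators $\widehat{\Delta}$, $\widehat{\Gamma}$, $\widehat{\Gamma}_2$ of the covering graph.

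Next, I would fix an arbitrary $g \in W$ and write $g = \widehat{f}$ for some $f \in V^\C$, which is possible by the definition of $W$. Since $(G,\sigma)$ is assumed to satisfy $CD^\sigma(n,\kappa)$, the function $f$ in particular satisfies the magnetic curvature-dimension inequality $CD^\sigma(n,\kappa)$ (this is exactly the hypothesis of Lemma \ref{localliftofcd} applied to $f$). Lemma \ref{localliftofcd} then yields that $\widehat{f}$ satisfies $CD(n,\kappa)$ on $\widehat{G}$, i.e., $g$ satisfies \eqref{CD-classical-ineq} at every vertex of $\widehat{V}$.

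Since $g \in W$ was arbitrary, every function in $W$ satisfies the classical curvature dimension inequality on the lift, which by definition means $\widehat{G}$ satisfies $CD(n,\kappa,W)$. The only substantive work has already been done in Lemma \ref{localliftofcd}; the main conceptual point to emphasize is that while $CD(n,\kappa)$ for $\widehat{G}$ would require the inequality for all of $\widehat{V}^\C$, the lifts of magnetic functions fill out only the subspace $W$, and this is precisely the gap witnessed by the weak formulation. There is no real obstacle here beyond being careful with the definitions; a one- or two-sentence proof should suffice.
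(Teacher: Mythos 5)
Your proof is correct and takes exactly the paper's approach: the paper simply notes that Lemma \ref{weakliftofcd} follows immediately from Lemma \ref{localliftofcd}, and your argument is precisely that reduction, spelled out by writing an arbitrary element of $W$ as $\widehat{f}$ and invoking Lemma \ref{localliftofcd}. No gaps.
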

The proof of Lemma \ref{localliftofcd} is routine, relying on Lemma \ref{lift-vs-magnetic-quantities}, and Lemma \ref{weakliftofcd} follows immediately thereafter. 
    \begin{lemma}\label{lifteigenfunction}
        Suppose $(G,\sigma)$ is a magnetic graph. 
        If $f$ is an eigenfunction for $\Delta^\sigma$ with eigenvalue $\lambda$ then $\widehat{f}$ is an eigenfunction for $\widehat{\Delta}$ with eigenvalue $\lambda$. 
    \end{lemma}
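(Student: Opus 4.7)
The plan is essentially to invoke Lemma \ref{lift-vs-magnetic-quantities}(ii) as a one-line identity. That lemma already furnishes the pointwise formula $(\widehat{\Delta}\widehat{f})(x,\xi) = \xi(\Delta^\sigma f)(x)$ at every $(x,\xi)\in\widehat{V}$, so the eigenfunction property should transfer almost automatically to the lift.

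First I would fix an arbitrary lifted vertex $(x,\xi)\in\widehat{V}$ and apply Lemma \ref{lift-vs-magnetic-quantities}(ii) to rewrite $(\widehat{\Delta}\widehat{f})(x,\xi)$ as $\xi(\Delta^\sigma f)(x)$. Next I would substitute the eigenfunction hypothesis, writing $(\Delta^\sigma f)(x) = \lambda f(x)$. Unwinding the definition $\widehat{f}(x,\xi) = \xi f(x)$ on the resulting right-hand side then yields $\lambda \widehat{f}(x,\xi)$, so that $\widehat{\Delta}\widehat{f} = \lambda \widehat{f}$ holds identically on $\widehat{V}$. To confirm $\widehat{f}$ is genuinely an eigenfunction (that is, nonzero), I would pick any $x\in V$ with $f(x)\neq 0$; then for every $\xi\in S^1_\ell$ one has $\widehat{f}(x,\xi) = \xi f(x)\neq 0$, so $\widehat{f}\not\equiv 0$ whenever $f\not\equiv 0$.

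There is no real obstacle here: all the heavy lifting has already been done in establishing Lemma \ref{lift-vs-magnetic-quantities}(ii), and what remains is a single pointwise substitution together with the observation that the lift embedding transformation is injective on nonzero functions.
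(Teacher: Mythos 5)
Your proposal is correct and is exactly the argument the paper intends: the paper explicitly states that the proof of this lemma is routine and relies on Lemma \ref{lift-vs-magnetic-quantities}, and your pointwise substitution $(\widehat{\Delta}\widehat{f})(x,\xi)=\xi(\Delta^\sigma f)(x)=\lambda\xi f(x)=\lambda\widehat{f}(x,\xi)$, together with the observation that $\widehat{f}\not\equiv 0$ whenever $f\not\equiv 0$, is that routine verification.
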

Again, the proof of Lemma \ref{lifteigenfunction} is routine, relying on Lemma \ref{lift-vs-magnetic-quantities}.


\subsection{Summary of results}
This paper is an adaptation of the Harnack inequality of Chung, Lin, Yau\cite[Thm. 3.3]{CLY-14} to eigenfunctions of the magnetic Laplace operator for simple, connected, unbalanced magnetic graphs. They proved that for simple connected graphs satisfying the curvature dimension inequality $CD(n,\kappa)$, the following holds at each $x\in V$:
    \begin{equation}\label{clasicalharnack}
        \frac{1}{d_x}\sum_{y\sim x}|f(y)-f(x)|^2\leq \left(\left(8-\frac{2}{n}\right)\lambda-4\kappa\right)\max_{z\in V}|f|^2(z).
    \end{equation}
where $f$ is any eigenfunction of $-\Delta$ with nontrivial eigenvalue $\lambda>0$. We supply in section \ref{harnackproof} a proof of the same inequality extended to eigenfunctions of  $f:V\rightarrow\C$ of $-\Delta$, and apply this result to the lift associated to a simple, connected, unbalanced magnetic graph satisfying $CD^\sigma(n,\kappa)$ to obtain, at each $x\in V$,
    \[
        \frac{1}{d_x}\sum_{y\sim x}|f(x)-\sigma_{xy}f(y)|^2\leq \left(\left(8-\frac{2}{n}\right)\lambda-4\kappa\right)\max_{z\in V}|f|^2(z).
    \]
where $f:V\rightarrow\C$ is any eigenfunction of $-\Delta^\sigma$ with nontrivial eigenvalue $\lambda>0$. In section \ref{applications}, we discuss two applications. First, invoking an argument in \cite[Thm. 3.5]{CLY-14}, we derive the eigenvalue bound
    \[\lambda\geq \frac{1+4\kappa d \left(2D +\ell g^\sigma\right)^2}{d(8-\frac{2}{n})\left(2 +\ell g^\sigma\right)^2}\]
for the least eigenvalue of $-\Delta^\sigma$, for a simple magnetic graph with signature in the cyclic group $S^1_\ell$ of order $\ell$ satisfying $CD^\sigma(n,\kappa)$, with maximum degree $d$, and diameter $D$. The second application is a lower bound on the first magnetic Cheeger number $h^\sigma_1$, \textit{c.f.} definition \eqref{cheegerno}, of the graph:
    \[\frac{1+4\kappa d \left(2D +\ell g^\sigma\right)^2}{d(16-\frac{4}{n})\left(2D +\ell g^\sigma\right)^2}\leq h_1^\sigma.\]
One can heuristically think of the first Cheeger number as quantifying the extent to which the graph is balanced. This can be made precise, see e.g.\cite[Thm. 6.4]{LLPP-15}.

\par
Worth noting is that the Harnack inequality in equation \eqref{clasicalharnack} was strengthened by Chung and Yau\cite{CY-17} in 2017. It remains open whether this new result can be formulated for the connection graph case.

\section{Harnack inequality}\label{harnackproof}

The following arguments are made in the same manner as Chung, Lin, Yau\cite[Lemma 3.1, Thm. 3.2]{CLY-14}, with small adjustments made throughout to account for the complexity of the function values. We provide the computations for completeness.

\begin{lemma}\label{lemma-energy-laplace}
Let $G$ be a finite connected graph and suppose $f\in V^\C$ satisfies $CD(n,\kappa)$. Then at each vertex $x\in V$, 
    \[\left(\frac{4}{n}-2\right)|\Delta f(x)|^2+(2+2\kappa)|\nabla f|^2(x)\leq \frac{1}{d_x}\sum_{y\sim x}\frac{p_{xy}}{d_y}\sum_{z\sim y}p_{yz}|f(x)-2f(y)+f(z)|^2.\]
\end{lemma}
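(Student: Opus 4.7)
The strategy is to expand the right-hand side directly and recognize it, term by term, as a linear combination of $\Gamma_2(f,f)(x)$, $\Gamma(f,f)(x)$, and $|\Delta f(x)|^2$, then invoke the $CD(n,\kappa)$ hypothesis to close out. Concretely, write $f(x)-2f(y)+f(z)=(f(z)-f(y))-(f(y)-f(x))$ and expand the squared modulus; because the two functions involved are complex-valued, this produces one real-part cross term which must be tracked carefully.

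First, I would carry out the inner sum over $z\sim y$ with weights $p_{yz}/d_y$. Two of the three resulting pieces are immediate: the $|f(z)-f(y)|^2$ piece produces $|\nabla f|^2(y)$, and the $|f(y)-f(x)|^2$ piece comes out unchanged (since $\frac{1}{d_y}\sum_{z\sim y} p_{yz}=1$). The cross term, after using $\frac{1}{d_y}\sum_{z\sim y} p_{yz}(f(z)-f(y))=\Delta f(y)$, collapses to $-2\,\text{Re}\bigl[\overline{(f(y)-f(x))}\Delta f(y)\bigr]$.

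Next I would perform the outer sum over $y\sim x$ with weights $p_{xy}/d_x$. The first piece becomes $|\nabla f|^2(x)+\Delta(|\nabla f|^2)(x)=|\nabla f|^2(x)+2\Delta\Gamma(f,f)(x)$ by adding and subtracting $|\nabla f|^2(x)$. The second piece is just $|\nabla f|^2(x)=2\Gamma(f,f)(x)$. For the cross term, add and subtract $\Delta f(x)$ inside $\Delta f(y)$: this yields $2\,\text{Re}\,\Gamma(f,\Delta f)(x)$ plus an additional contribution equal to $|\Delta f(x)|^2$ (using $\frac{1}{d_x}\sum_{y\sim x}p_{xy}\overline{(f(y)-f(x))}=\overline{\Delta f(x)}$). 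Assembling everything gives
\[
R(x)=4\Gamma(f,f)(x)+2\Delta\Gamma(f,f)(x)-4\,\text{Re}\,\Gamma(f,\Delta f)(x)-2|\Delta f(x)|^2,
\]
where $R(x)$ denotes the right-hand side of the lemma.

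Finally, I would recognize from \eqref{gamma2-defn} with $g=f$, together with the conjugate symmetry $\Gamma(\Delta f,f)=\overline{\Gamma(f,\Delta f)}$, that $2\Delta\Gamma(f,f)-4\,\text{Re}\,\Gamma(f,\Delta f)=4\Gamma_2(f,f)$. Substituting and applying the $CD(n,\kappa)$ inequality \eqref{CD-classical-ineq} to $\Gamma_2(f,f)(x)$ produces precisely $(2+2\kappa)|\nabla f|^2(x)+(4/n-2)|\Delta f(x)|^2$ as a lower bound for $R(x)$, which is the stated inequality. The main obstacle is purely bookkeeping: making sure the complex conjugates in $\Gamma$ combine correctly into the single real-part cross term, which is the only place the argument departs from the real-valued version in \cite{CLY-14}.
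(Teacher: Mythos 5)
Your proof is correct and is essentially the paper's argument run in the opposite direction: both rest on the identity $\frac{1}{d_x}\sum_{y\sim x}\frac{p_{xy}}{d_y}\sum_{z\sim y}p_{yz}|f(x)-2f(y)+f(z)|^2=4\Gamma_2(f,f)(x)+2|\nabla f|^2(x)-2|\Delta f(x)|^2$ (you obtain it by expanding the right-hand side, the paper by expanding $\Delta\Gamma(f)$ and $\Gamma(f,\Delta f)$), followed by the same application of $CD(n,\kappa)$. All the bookkeeping with the real-part cross term and the reduction $\Gamma(f,\Delta f)+\Gamma(\Delta f,f)=2\,\mathrm{Re}\,\Gamma(f,\Delta f)$ checks out.
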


\begin{proof}
First we compute the Laplacian of $\Gamma(f)$:
    \[\begin{split}
        2\Delta\left[\Gamma(f)\right](x)&=\frac{1}{d_x}\sum_{y\sim x}p_{xy}\left[\Gamma(f)(y)-\Gamma(f)(x)\right]=\frac{1}{d_x}\sum_{y\sim x}p_{xy}\left[|\nabla f|^2(y)-|\nabla f|^2(x)\right]
        \\&=\frac{1}{d_x}\sum_{y\sim x}\frac{p_{xy}}{d_y}\sum_{z\sim y}p_{yz}\left[|f(z)-f(y)|^2-|f(y)-f(x)|^2\right]
    \end{split}\]
Using the straightforward expansion
    \[\begin{split}
    |f(z)-f(y)|^2-|f(y)-f(x)|^2&=|f(x)-2f(y)+f(z)|^2-(f(x)-f(y))\overline{(f(x)-2f(y)+f(z))}\\
    &\hspace{1.5cm}-\overline{(f(x)-f(y))}(f(x)-2f(y)+f(z)),
    \end{split}\]
we obtain
    \[\begin{split}
        2\Delta\left[\Gamma(f)\right](x)&=\frac{1}{d_x}\sum_{y\sim x}\frac{p_{xy}}{d_y}\sum_{z\sim y}p_{yz}|f(x)-2f(y)+f(z)|^2\\
        &+\frac{2}{d_x}\sum_{y\sim x}\frac{p_{xy}}{d_y}\sum_{z\sim y}p_{yz}\mathfrak{Re}\left\{{\left(f(y)-f(x)\right)\overline{\left(f(x)-2f(y)+f(z)\right)}}\right\}.
    \end{split}\]
Recalling equation \eqref{firstcurve-expr}, we have
    \[\begin{split}
        2\left[\Gamma(f,\Delta f)\right](x)&=\frac{1}{d_x}\sum_{y\sim x}p_{xy}\left(f(y)-f(x)\right)\overline{\left(\Delta f(y)-\Delta f(x)\right)}\\
        &=-|\Delta f(x)|^2+\frac{1}{d_x}\sum_{y\sim x}\frac{p_{xy}}{d_y}\sum_{z\sim y}{p_{yz}}(f(y)-f(x))\overline{\left(f(z)-f(y)+(f(y)-f(x))-(f(y)-f(x))\right)}\\
        &=-|\Delta f(x)|^2+|\nabla f|^2(x)+\frac{1}{d_x}\sum_{y\sim x}\frac{p_{xy}}{d_y}\sum_{z\sim y}p_{yz}(f(y)-f(x))\overline{(f(x)-2f(y)+f(z))}\\
    \end{split}\]
From equation \eqref{gamma2-defn}, we have
    \[\begin{split}
        \Gamma_2(f)(x)&=\frac{1}{2}\left[\Delta\Gamma(f)(x)-2\mathfrak{Re}\left\{ \Gamma(f,\Delta f)(x)\right\}\right]\\
        &=\frac{1}{2}\left[|\Delta f(x)|^2-|\nabla f|^2(x)+\frac{1}{2d_x}\sum_{y\sim x}\frac{p_{xy}}{d_y}\sum_{z\sim y}p_{yz}|f(x)-2f(y)+f(z)|^2\right]
    \end{split}\]
Since $G$ satisfies $CD(n,\kappa)$,
    \[\frac{1}{2}\left[|\Delta f(x)|^2 -|\nabla f|^2(x)+\frac{1}{2d_x}\sum_{y\sim x}\frac{p_{xy}}{d_y}\sum_{z\sim y}p_{yz}|f(x)-2f(y)+f(z)|^2\right]\geq \frac{1}{n}|\Delta f(x)|^2+\kappa\Gamma(f)(x).   \]
Isolating the Laplacian and $|\nabla f|^2(x)$ terms on the LHS, along with manipulating constants appropriately, provide the lemma.
\end{proof}

\begin{lemma}\label{lemma-laplace-of-square}
Let $G$ be a simple connected graph and let $f\in V^\C$. Then it holds at each $x\in V$,
    \[\left(\Delta |f|^2\right)(x)=|\nabla f|^2(x)-f(x)\overline{\Delta f(x)}-\overline{f(x)}\Delta f(x).\]
If $f$ is an eigenfunction for $-\Delta$ with eigenvalue $\lambda$, this becomes
    \[\left(-\Delta |f|^2\right)(x)=2\lambda|f|^2(x)-|\nabla f|^2(x).\]
\end{lemma}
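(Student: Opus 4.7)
The plan is to view this lemma as a pointwise algebraic identity that simply unpacks the definition of the first curvature operator. The cleanest route is to begin from the defining formula
\[
2\Gamma(f,g) = \Delta(f\overline{g}) - f\,\overline{\Delta g} - (\Delta f)\overline{g},
\]
specialize to $g=f$, and identify $2\Gamma(f,f)(x)$ with $|\nabla f|^2(x)$ via equation \eqref{energy-defn}. Solving for $(\Delta|f|^2)(x)$ produces the first identity immediately; no further manipulation is required.

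If one prefers a self-contained derivation that bypasses $\Gamma$ entirely, one can also argue pointwise. Apply the graph Laplacian formula \eqref{laplacian-defn} directly to $|f|^2$ and expand each summand via the algebraic identity
\[
|f(y)|^2 - |f(x)|^2 = |f(y)-f(x)|^2 + \overline{f(x)}(f(y)-f(x)) + f(x)\overline{(f(y)-f(x))}.
\]
Summing against $p_{xy}/d_x$ and invoking \eqref{laplacian-defn} and \eqref{energy-defn}, the three pieces recombine into $|\nabla f|^2(x)$ together with the two cross-terms $\overline{f(x)}\,\Delta f(x)$ and $f(x)\,\overline{\Delta f(x)}$. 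This reproduces the first identity, and serves as a useful check on the sign bookkeeping.

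For the second claim, substitute the eigenvalue relation into the first identity. Because $-\Delta$ is Hermitian and positive semidefinite, every eigenvalue $\lambda$ is a nonnegative real number, so $\Delta f = -\lambda f$ implies $\overline{\Delta f} = -\lambda\overline{f}$ as well. Plugging these into the first identity collapses the two cross-terms into a single multiple of $\lambda\,|f|^2(x)$, and transposing yields the stated formula. As the argument is essentially mechanical, I do not anticipate any real obstacle; the only bookkeeping task is ensuring that the complex conjugations and the sign of $\lambda$ track correctly through the substitution, and that the reality of $\lambda$ is used to pull the scalar past the conjugation.
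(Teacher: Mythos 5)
Your proposal is correct and your second, pointwise derivation is essentially identical to the paper's proof: the paper expands $|f(y)|^2-|f(x)|^2$ in exactly the same way (written with $f(x)-f(y)$ in place of $f(y)-f(x)$, which flips the signs of the cross-terms) and sums against $p_{xy}/d_x$. Your first route, reading the identity directly off the definition $2\Gamma(f,f)=\Delta(|f|^2)-f\overline{\Delta f}-(\Delta f)\overline{f}$ together with $2\Gamma(f,f)=|\nabla f|^2$, is a cleaner one-line version of the same fact. One caution: both of your derivations (and the paper's own proof, whose final line reads $|\nabla f|^2 - f\overline{(-\Delta f)}-\overline{f}(-\Delta f)$) actually yield
\[
(\Delta|f|^2)(x)=|\nabla f|^2(x)+f(x)\overline{\Delta f(x)}+\overline{f(x)}\Delta f(x),
\]
with plus signs on the cross-terms, whereas the lemma as printed has minus signs; the printed first display appears to carry a sign typo, since only the plus-sign version is consistent with the second formula $(-\Delta|f|^2)=2\lambda|f|^2-|\nabla f|^2$. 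Your computation is the correct one, so do not adjust your signs to match the statement.
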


\begin{proof}
We have
    \[\begin{split}
    |f(y)|^2-|f(x)|^2 &=f(y)\overline{f(y)}-f(x)\overline{f(x)}\\
    &=-\left[f(x)\overline{(f(x)-f(y))}+\overline{f(x)}(f(x)-f(y))\right]\\
    &+\left[f(x)\overline{f(x)}+f(y)\overline{f(y)}-f(y)\overline{f(x)}-f(x)\overline{f(y)}\right]\\
    &=|f(x)-f(y)|^2-f(x)\overline{(f(x)-f(y))}-\overline{f(x)}(f(x)-f(y))
    \end{split}\]
so
    \[\begin{split}
    \left(\Delta|f|^2\right)(x)&=\frac{1}{d_x}\sum_{y\sim x}p_{xy}\left(|f(y)|^2-|f(x)|^2\right)\\
    &=\frac{1}{d_x}\sum_{y\sim x}p_{xy} \left[|f(x)-f(y)|^2-f(x)\overline{(f(x)-f(y))}-\overline{f(x)}(f(x)-f(y))\right]\\
    &=|\nabla f(x)|^2-f(x)\overline{\left(-\Delta f(x)\right)}-\overline{f(x)}\left(-\Delta f(x)\right).
    \end{split}\]
\end{proof}

\begin{theorem}
Let $G$ be a finite connected graph and suppose $f\in V^\C$ is a harmonic eigenfunction of $-\Delta$ with nontrivial eigenvalue $\lambda>0$ satisfying $CD(n,\kappa)$. Then the following holds at each $x\in V$ and $\alpha> 2-2{\kappa}/{\lambda}$:
    \begin{equation}
        |\nabla f|^2(x)+\alpha\lambda |f|^2(x)\leq \frac{(\alpha^2-\frac{4}{n})\lambda+2\kappa\alpha}{(\alpha-2)\lambda+2\kappa}\lambda\max_{z\in V}|f|^2(z).
    \end{equation}
\end{theorem}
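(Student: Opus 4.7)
The plan is to apply the discrete maximum principle to the auxiliary function
\[
    G(x) := |\nabla f|^2(x) + \alpha\lambda|f|^2(x).
\]
Let $x_0 \in V$ be a vertex where $G$ attains its maximum on $V$. Since $G(y) - G(x_0) \leq 0$ for every $y\sim x_0$ and the edge weights are nonnegative, it follows immediately that $(\Delta G)(x_0) \leq 0$.

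The core computation is a pointwise lower bound on $\Delta G$. Lemma \ref{lemma-laplace-of-square} handles the second summand directly, giving $\Delta|f|^2(x) = |\nabla f|^2(x) - 2\lambda|f|^2(x)$. For the first summand, I start from
\[
    2\Gamma_2(f) \;=\; \Delta\Gamma(f) - \Gamma(f,\Delta f) - \Gamma(\Delta f, f) \;=\; \Delta\Gamma(f) - 2\mathfrak{Re}\,\Gamma(f,\Delta f),
\]
where the second equality uses the conjugate symmetry of $\Gamma$. The eigenvalue equation $\Delta f = -\lambda f$, together with conjugate linearity of $\Gamma$ in its second slot and $\lambda \in \R$, gives $\Gamma(f,\Delta f) = -\lambda\Gamma(f)$, and hence $\Delta\Gamma(f) = 2\Gamma_2(f) - 2\lambda\Gamma(f)$. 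Applying the hypothesis $CD(n,\kappa)$ to $f$ and using $|\Delta f|^2 = \lambda^2|f|^2$ together with $2\Gamma(f) = |\nabla f|^2$ then yields
\[
    \Delta|\nabla f|^2(x) \;\geq\; \tfrac{4\lambda^2}{n}\,|f|^2(x) + 2(\kappa - \lambda)\,|\nabla f|^2(x).
\]
(This bound can equivalently be extracted from Lemma \ref{lemma-energy-laplace} specialised to an eigenfunction.) Assembling the two pieces gives
\[
    \Delta G(x) \;\geq\; \bigl((\alpha - 2)\lambda + 2\kappa\bigr)\,|\nabla f|^2(x) + \bigl(\tfrac{4}{n} - 2\alpha\bigr)\lambda^2\,|f|^2(x).
\]

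Evaluating at $x_0$ and using $(\Delta G)(x_0) \leq 0$ converts the display above into a scalar inequality. The hypothesis $\alpha > 2 - 2\kappa/\lambda$ makes the coefficient $(\alpha-2)\lambda + 2\kappa$ strictly positive, so this inequality can be solved for $|\nabla f|^2(x_0)$:
\[
    |\nabla f|^2(x_0) \;\leq\; \frac{(2\alpha - 4/n)\lambda^2}{(\alpha - 2)\lambda + 2\kappa}\,|f|^2(x_0).
\]
Substituting this into $G(x_0) = |\nabla f|^2(x_0) + \alpha\lambda|f|^2(x_0)$, combining the two terms over the common denominator $(\alpha-2)\lambda + 2\kappa$, and simplifying the numerator to $\bigl((\alpha^2 - 4/n)\lambda + 2\kappa\alpha\bigr)\lambda$ produces exactly the stated coefficient; the theorem then follows from $G(x) \leq G(x_0)$ and $|f|^2(x_0) \leq \max_{z\in V}|f|^2(z)$. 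The only genuinely delicate point is the conjugation bookkeeping that turns the $CD$ inequality into a lower bound for $\Delta|\nabla f|^2$ in the complex-valued setting; once that is in hand, the remaining work is algebraic.
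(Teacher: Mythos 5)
Your proof is correct and follows essentially the same route as the paper: a maximum-principle argument applied to the auxiliary function $|\nabla f|^2(x)+\alpha\lambda|f|^2(x)$, driven by the same differential inequality $(-\Delta)|\nabla f|^2(x)\le(2\lambda-2\kappa)|\nabla f|^2(x)-\tfrac{4}{n}\lambda^2|f|^2(x)$ together with Lemma \ref{lemma-laplace-of-square}. The only difference is organizational: you obtain that inequality directly from the definition of $\Gamma_2$, conjugate symmetry, and $\Gamma(f,\Delta f)=-\lambda\Gamma(f)$, whereas the paper reaches the identical bound by expanding the explicit double sum from the proof of Lemma \ref{lemma-energy-laplace} and then simplifying the cross term with the eigenvalue equation.
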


\begin{proof}
Using Lemma \ref{lemma-energy-laplace} and its proof,
    \[\begin{split}
    (-\Delta)|\nabla f|^2(x)&=(-2)\Delta\left[\Gamma(f)\right](x)\\
    &=-\frac{1}{d_x}\sum_{y\sim x}\frac{p_{xy}}{d_y}\sum_{z\sim y}p_{yz}|f(x)-2f(y)+f(z)|^2\\
    &-\frac{2}{d_x}\sum_{y\sim x}\frac{p_{xy}}{d_y}\sum_{z\sim y}p_{yz}\mathfrak{Re}\left\{{\left(f(y)-f(x)\right)\overline{\left(f(x)-2f(y)+f(z)\right)}}\right\}\\
    &\leq -\left(\frac{4}{n}-2\right)|\Delta f(x)|^2-(2+2\kappa)|\nabla f|^2(x)\\
    &-\frac{2}{d_x}\sum_{y\sim x}\frac{p_{xy}}{d_y}\sum_{z\sim y}p_{yz}\mathfrak{Re}\left\{{\left(f(y)-f(x)\right)\overline{\left(f(x)-2f(y)+f(z)\right)}}\right\}\\
    &=\lambda^2\left(2-\frac{4}{n}\right)|f|^2(x)-2\kappa|\nabla f|^2(x)-2\mathfrak{Re}\left\{\frac{1}{d_x}\sum_{y\sim x}p_{xy}(f(y)-f(x))(-\lambda\overline{f(y)})\right\}\\
    &=\lambda^2\left(2-\frac{4}{n}\right)|f|^2(x)-2\kappa|\nabla f|^2(x)\\
    &\hspace{.35cm}-2\mathfrak{Re}\left\{\frac{1}{d_x}\sum_{y\sim x}p_{xy}(f(y)-f(x))(-\lambda\overline{f(y)}+\lambda\overline{ f(x)}-\lambda\overline{ f(x)})\right\}\\
    &=\lambda^2\left(2-\frac{4}{n}\right)|f|^2(x)-2\kappa|\nabla f|^2(x)+2\lambda|\nabla f|^2(x)+2\lambda^2|f|^2(x)\\
    &=(2\lambda -2\kappa)|\nabla f|^2(x)-\frac{4}{n}\lambda^2|f|^2(x)
    \end{split}\]
Combining the preceding inequality and Lemma \ref{lemma-laplace-of-square}, we get, for $\alpha>0$,
    \[\begin{split}
        (-\Delta)(|\nabla f|^2(x)+\alpha\lambda|f|^2(x))&\leq (2\lambda -2\kappa)|\nabla f|^2(x)-\frac{4}{n}\lambda^2|f|^2(x)+2\alpha\lambda^2|f|^2(x)-\alpha\lambda|\nabla f|^2(x)\\
        &\leq\left(2\lambda-\alpha\lambda-2\kappa\right)|\nabla f|^2(x)+\left(2\alpha-\frac{4}{n}\right)\lambda^2|f|^2(x)
    \end{split}\]
    
Let $v\in V$ satisfy
    \[|\nabla f|^2(v)+\alpha\lambda|f|^2(v)=\max_{z\in V}\left[|\nabla f|^2(z)+\alpha\lambda|f|^2(z)\right].\]
Then since $v$ maximizes the expression over $V$, 
    \[\begin{split}
        0&\leq (-\Delta)\left(|\nabla f|^2(v)+\alpha\lambda|f|^2(v)\right)\\
        &\leq \left(2\lambda-\alpha\lambda-2\kappa\right)|\nabla f|^2(v)+\left(2\alpha-\frac{4}{n}\right)\lambda^2|f|^2(v).\\
    \end{split}\]
So, for $\alpha> 2-2\kappa/\lambda$,
    \[|\nabla f|^2(v)\leq\frac{2\alpha-\frac{4}{n}}{
    \lambda(\alpha-2)+2\kappa}\lambda^2|f|^2(v).\]
By our choice of $x\in V$,
    \[\begin{split}
    |\nabla f|^2(x)+\alpha\lambda|f|^2(x)&\leq |\nabla f|^2(v)+\alpha\lambda|f|^2(v)\\
    &\leq \frac{2\alpha-\frac{4}{n}}{\lambda(\alpha-2)+2\kappa}\lambda^2|f|^2(v)+\alpha\lambda|f|^2(v)\\
    &\leq\frac{(\alpha^2-\frac{4}{n})\lambda+2\kappa\alpha}{(\alpha-2)\lambda+2\kappa}\lambda\max_{z\in V}|f|^2(z).
    \end{split}\]
\end{proof}

Again following \cite[Thm. 3.3]{CLY-14}, choose $\alpha=4-2\kappa/\lambda$ to obtain the following:

\begin{theorem}\label{complexCLY}
Let $G$ be a finite connected graph and suppose $f\in V^\C$ is a harmonic eigenfunction of $-\Delta$ with nontrivial eigenvalue $\lambda>0$ satisfying $CD(n,\kappa)$. Then the following holds at each $x\in V$:
    \[
        |\nabla f|^2(x)\leq \left(\left(8-\frac{2}{n}\right)\lambda-4\kappa\right)\max_{z\in V}|f|^2(z).
    \]
\end{theorem}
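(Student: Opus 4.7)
The plan is to specialize the preceding theorem to the specific value $\alpha = 4 - 2\kappa/\lambda$ and then discard a non-negative term from the left-hand side. This is essentially a one-parameter optimization: the previous theorem holds for every $\alpha > 2 - 2\kappa/\lambda$, and my job is to pick the value of $\alpha$ that produces the cleanest bound.

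First I would verify admissibility: $\alpha = 4 - 2\kappa/\lambda$ satisfies $\alpha > 2 - 2\kappa/\lambda$ unconditionally, since this reduces to the trivial inequality $4 > 2$. Second, I would substitute this $\alpha$ into the rational expression on the right-hand side of the preceding theorem and simplify. I expect the denominator $(\alpha - 2)\lambda + 2\kappa$ to collapse to $2\lambda$; this is precisely the design principle behind the choice of $\alpha$. In the numerator $(\alpha^2 - \tfrac{4}{n})\lambda + 2\kappa\alpha$, the $\kappa^2/\lambda$ contributions coming from the $\alpha^2\lambda$ and $2\kappa\alpha$ terms should cancel against each other, leaving only a linear combination of $\lambda$ and $\kappa$. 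After this reduction, the right-hand side should simplify to exactly $\left((8 - \tfrac{2}{n})\lambda - 4\kappa\right)\max_{z \in V}|f|^2(z)$.

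Third, with this choice, the coefficient of $|f|^2(x)$ on the left-hand side is $\alpha\lambda = 4\lambda - 2\kappa$. In the regime where the conclusion is non-vacuous, namely $\kappa \leq 2\lambda$, this coefficient is non-negative, so dropping the term $(4\lambda - 2\kappa)|f|^2(x)$ from the left-hand side only weakens the inequality and yields
\[
|\nabla f|^2(x) \leq \left((8 - \tfrac{2}{n})\lambda - 4\kappa\right)\max_{z \in V}|f|^2(z),
\]
which is the claim. (In the opposite regime $\kappa > 2\lambda$, the right-hand side of the preceding theorem is already negative, so the hypotheses cannot be realized by a nonconstant eigenfunction and the statement is vacuous.)

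There is no substantive analytic obstacle at this stage: all of the real work, namely the maximum-principle argument driving Lemma \ref{lemma-energy-laplace} together with Lemma \ref{lemma-laplace-of-square} and the use of the $CD(n,\kappa)$ hypothesis, has already been carried out in the preceding theorem. The only thing to get right is the algebraic bookkeeping, and the conceptual point that $\alpha = 4 - 2\kappa/\lambda$ is chosen precisely so that the denominator of the rational factor collapses, which is what yields the compact final constant $(8 - 2/n)\lambda - 4\kappa$.
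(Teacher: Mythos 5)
Your proposal is correct and is exactly the paper's argument: the paper's entire proof of Theorem \ref{complexCLY} is the one-line instruction to substitute $\alpha = 4 - 2\kappa/\lambda$ into the preceding theorem, and your algebra checks out (the denominator collapses to $2\lambda$, the $\kappa^2/\lambda$ contributions cancel, and the leftover term $(4\lambda-2\kappa)\,|f|^2(x)$ is discarded). You are in fact more careful than the paper about the sign of the discarded term; the only soft spot is the parenthetical vacuousness argument for $\kappa > 2\lambda$, since the left-hand side of the preceding theorem need not be nonnegative there, but that regime is genuinely excluded because summing the $CD(n,\kappa)$ inequality over $V$ with weights $d_x$ yields the Lichnerowicz-type bound $\lambda \geq \tfrac{n}{n-1}\kappa$, so nothing is lost.
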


Applying Lemmas \ref{lift-vs-magnetic-quantities}, \ref{weakliftofcd}, and Theorem \ref{complexCLY}, the following is immediately proved.

\begin{theorem}
Let $(G,\sigma)$ be a finite, connected, unbalanced magnetic graph with an entire signature $\sigma$ taking values in $S^1_\ell$. If $(G,\sigma)$ satisfies $CD^\sigma(n,\kappa)$ and $f\in V^\C$ is a harmonic eigenfunction of $-\Delta^\sigma$ with nontrivial eigenvalue $\lambda>0$, then the following holds at each $x\in V$:
    \[
        |\nabla^\sigma f|^2(x)\leq \left(\left(8-\frac{2}{n}\right)\lambda-4\kappa\right)\max_{z\in V}|f|^2(z).
    \]
\end{theorem}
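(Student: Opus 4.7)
My plan is to transfer the problem to the covering graph $\widehat{G}$, apply Theorem \ref{complexCLY} there, and translate back using the lift embedding. The ingredients are all in place from the preliminaries: Lemma \ref{lifteigenfunction} supplies the eigenfunction on $\widehat{G}$, Lemma \ref{weakliftofcd} supplies the curvature-dimension hypothesis, Lemma \ref{lift-vs-magnetic-quantities} provides the translation between magnetic and lifted quantities, and Theorem \ref{complexCLY} is the classical Harnack inequality to be applied.

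First I would note that by Lemma \ref{lifteigenfunction}, $\widehat{f}$ is a harmonic eigenfunction of $-\widehat{\Delta}$ with the same nontrivial eigenvalue $\lambda>0$. By Lemma \ref{weakliftofcd}, $\widehat{G}$ satisfies the $W$-weak inequality $CD(n,\kappa,W)$, and since $\widehat{f}\in W$ by definition of the lift embedding, the pointwise $CD(n,\kappa)$ inequality holds for $\widehat{f}$ at every vertex of $\widehat{G}$.

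Applying Theorem \ref{complexCLY} to $\widehat{f}$ on $\widehat{G}$ then yields
\[
|\nabla\widehat{f}|^2(x,\xi)\leq\left(\left(8-\frac{2}{n}\right)\lambda-4\kappa\right)\max_{(z,\eta)\in\widehat{V}}|\widehat{f}|^2(z,\eta).
\]
The final step is to invoke Lemma \ref{lift-vs-magnetic-quantities}(i) to rewrite the left-hand side as $|\nabla^\sigma f|^2(x)$, and to observe that since every $\eta\in S^1_\ell$ has modulus one, $|\widehat{f}(z,\eta)|^2=|\eta f(z)|^2=|f(z)|^2$ for all $(z,\eta)\in\widehat{V}$. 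Hence the maximum over $\widehat{V}$ collapses to $\max_{z\in V}|f|^2(z)$, giving the claimed inequality.

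The only mild subtlety --- and it is barely one --- is noticing that Theorem \ref{complexCLY} does not actually require global $CD(n,\kappa)$ on $\widehat{G}$ for its conclusion; inspecting its proof (through Lemma \ref{lemma-energy-laplace}), the curvature-dimension inequality is invoked only pointwise for the eigenfunction being bounded, so the $W$-weak hypothesis applied to $\widehat{f}$ suffices. No additional computation beyond this observation is needed.
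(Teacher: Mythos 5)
Your proposal is correct and follows essentially the same route as the paper, which proves this theorem by directly combining Lemmas \ref{lift-vs-magnetic-quantities}, \ref{weakliftofcd} (together with Lemma \ref{lifteigenfunction}) and Theorem \ref{complexCLY} on the covering graph. Your added remark that the $W$-weak hypothesis suffices because Theorem \ref{complexCLY} only uses the curvature-dimension inequality pointwise for the eigenfunction in question is a fair and correct clarification of why the application goes through.
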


\section{Eigenvalue estimate and application to Magnetic Cheeger number}\label{applications}
We will now apply the Harnack inequality in the manner of Chung, Lin, and Yau\cite{CLY-14} to obtain a lower bound for  the eigenvalues of the magnetic Laplacian. 

\begin{theorem}[Chung, Lin, Yau]\label{chungyauestimate}
    Suppose $G=(V,E,p)$ is a finite connected graph, and suppose $f\in V^\R$ is a harmonic eigenfunction for $-\Delta$ with eigenvalue $\lambda>0$, satisfying $CD(n,\kappa)$. Then it holds
        \[\lambda\geq\frac{1+4\kappa d D^2}{d(8-\frac{2}{n})D^2}.\]
\end{theorem}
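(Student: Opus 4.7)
The plan is to use the Harnack inequality of Theorem \ref{complexCLY} together with a telescoping argument along a shortest path in $G$. Since $f$ is a real-valued eigenfunction of $-\Delta$ with eigenvalue $\lambda > 0$, it is orthogonal to the constants (with respect to the degree-weighted inner product that makes $-\Delta$ self-adjoint), and therefore must change sign. After normalizing so that $M := \max_z |f(z)|$ is attained at some $x_+$ with $f(x_+) = M$ (replacing $f$ by $-f$ if necessary), one selects a vertex $x_-$ with $f(x_-) \leq 0$, ensuring in particular $f(x_+) - f(x_-) \geq M$.

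Next I would fix a shortest path $x_+ = v_0 \sim v_1 \sim \cdots \sim v_k = x_-$ with $k \leq D$ and telescope:
\[M \leq f(x_+) - f(x_-) = \sum_{i=0}^{k-1} \bigl(f(v_i) - f(v_{i+1})\bigr) \leq \sum_{i=0}^{k-1} |f(v_{i+1}) - f(v_i)|.\]
Each single-edge difference is controlled by the energy at its tail: directly from the definition \eqref{energy-defn},
\[|f(v_{i+1}) - f(v_i)|^2 \leq \frac{d_{v_i}}{p_{v_i v_{i+1}}}\, |\nabla f|^2(v_i) \leq d\, |\nabla f|^2(v_i),\]
under the convention that edge weights are bounded below by $1$ and degrees are bounded above by $d$. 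Plugging in the Harnack estimate $|\nabla f|^2(v_i) \leq \bigl((8 - 2/n)\lambda - 4\kappa\bigr) M^2$ from Theorem \ref{complexCLY} and summing over the at most $D$ edges gives
\[M \leq D\, \sqrt{d\bigl((8 - \tfrac{2}{n})\lambda - 4\kappa\bigr)}\cdot M.\]
Cancelling $M$, squaring, and rearranging then produces the stated inequality.

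The main obstacle is organizational rather than computational. One must ensure that $f$ genuinely changes sign --- this is where the hypothesis $\lambda > 0$ is essential, since the only invariant of the trivial eigenspace is the constant function --- and keep the weight conventions consistent so that the maximum degree $d$ emerges cleanly in the single-edge bound. A minor subtlety is that the bound is only informative when $(8 - 2/n)\lambda - 4\kappa > 0$; otherwise the claimed inequality for $\lambda$ is automatically satisfied after rearrangement. Finally, it is worth noting that this argument will transfer directly to the magnetic setting in the subsequent eigenvalue estimate by applying it on the lift $\widehat G$, whose diameter satisfies $\widehat D \leq 2D + \ell g^\sigma$ by Lemma \ref{liftdiameter}.
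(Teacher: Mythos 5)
Your argument is correct and follows the same route the paper attributes to Chung--Lin--Yau (the paper itself only sketches it): take a shortest path between a maximizer of $|f|$ and a vertex where $f\leq 0$, control each edge difference by the Harnack bound on $|\nabla f|^2$, and telescope. The one slip is your closing parenthetical: if $(8-\tfrac{2}{n})\lambda-4\kappa\leq 0$ the claimed bound would be \emph{violated}, not automatically satisfied --- but that case is vacuous, since the Harnack inequality would then force $|\nabla f|^2\equiv 0$, hence $f$ constant, contradicting $\lambda>0$.
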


This fact was proved by Chung, Yau\cite[Thm. 3.5]{CLY-14} for the case where $\Delta$ is an operator on $V^\R$. The authors identify a path connecting the vertices at which the extreme values of the eigenfunction in question are attained. The Harnack inequality is then used to estimate the energy along the path, and in turn, the eigenvalue. In the case where $\Delta$ is an operator on $V^\C$, the estimate still holds. In allowing the function values to be complex, an extremality argument is replaced with a convexity argument in identifying the vertices between which the modulus of the difference of the function values is maximal, but the approach is otherwise identical. 

\begin{theorem}\label{magnetic-eigenvalue-estimate}
    Suppose $(G,\sigma)$ is a connected, simple, unabalanced magnetic graph with entire signature $\sigma$ taking values in a cyclic group $S^1_\ell$, $\ell\geq 2$, satisfying $CD^\sigma(n,\kappa)$. Let $G$ have diameter $D$, maximum degree $d$, magnetic girth $g^\sigma<\infty$. Suppose $\lambda>0$ is a nontrivial eigenvalue for $-\Delta^\sigma$. Then it holds
        \[\lambda\geq\frac{1+4\kappa d \left(2D +\ell g^\sigma\right)^2}{d(8-\frac{2}{n})\left(2D +\ell g^\sigma\right)^2}.\]
\end{theorem}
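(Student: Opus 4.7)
The plan is to lift the problem to the covering graph $\widehat{G}$, apply Theorem \ref{chungyauestimate} (in its complex-valued formulation indicated in the remark after its statement) to $\widehat{G}$ and the lifted eigenfunction $\widehat{f}$, and then pass from the diameter of the lift to the expression $2D + \ell g^\sigma$ via Lemma \ref{liftdiameter}. Most of the preceding lemmas have been set up precisely to enable this reduction, so the work amounts to checking hypotheses carefully and handling a small monotonicity argument at the end.

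First, I would verify each hypothesis of Theorem \ref{chungyauestimate} for $\widehat{G}$ and $\widehat{f}$. Connectedness of $\widehat{G}$ is supplied by the lemma immediately following Lemma \ref{liftdiameter}, where the unbalancedness of $(G,\sigma)$ and the assumption that $\sigma$ is entire are exactly what is needed. Since $(G,\sigma)$ satisfies $CD^\sigma(n,\kappa)$, the eigenfunction $f$ in particular satisfies it, so by Lemma \ref{localliftofcd} the lifted function $\widehat{f}$ satisfies $CD(n,\kappa)$; this suffices because Theorem \ref{chungyauestimate} only requires the CD inequality to hold for the eigenfunction in question, not globally on the graph. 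Lemma \ref{lifteigenfunction} gives that $\widehat{f}$ is an eigenfunction of $-\widehat{\Delta}$ with the same eigenvalue $\lambda>0$. A direct inspection of the definition of $\widehat{G}$ shows that the weighted degree at $(x,\xi)$ equals $d_x$, so the maximum degree of $\widehat{G}$ is the same $d$. Finally, Lemma \ref{liftdiameter} yields the bound $\widehat{D} \leq 2D + \ell g^\sigma$.

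Plugging these into Theorem \ref{chungyauestimate} produces
\[\lambda \geq \frac{1 + 4\kappa d\,\widehat{D}^{\,2}}{d(8-\tfrac{2}{n})\,\widehat{D}^{\,2}}.\]
Rewriting the right-hand side as $\frac{1}{d(8-2/n)\widehat{D}^{\,2}} + \frac{4\kappa}{8-2/n}$ displays it as a monotonically decreasing function of $\widehat{D}^{\,2}$, independent of the sign of $\kappa$. Replacing $\widehat{D}$ by the larger upper bound $2D + \ell g^\sigma$ therefore only weakens the lower bound, which gives the claimed inequality.

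The step that demands the most care is the verification that the complex-valued version of Theorem \ref{chungyauestimate} genuinely applies: the lifted function $\widehat{f}(x,\xi) = \xi f(x)$ is complex-valued even when $f$ is real, so one truly needs the complex extension described in the discussion after Theorem \ref{chungyauestimate}, where the original Chung-Lin-Yau real extremality argument is replaced by a convexity argument that selects a pair of lift vertices maximizing the modulus of the difference of $\widehat{f}$. Once this complex version is in hand, everything else amounts to bookkeeping with the lemmas from the preliminary section.
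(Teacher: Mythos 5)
Your proposal is correct and follows essentially the same route as the paper: lift the eigenfunction via Lemma \ref{lifteigenfunction}, transfer the curvature condition via Lemma \ref{localliftofcd}, apply Theorem \ref{chungyauestimate} (in its complex-valued form) to $\widehat{G}$, and then replace $\widehat{D}$ by the bound $2D+\ell g^\sigma$ from Lemma \ref{liftdiameter}. Your explicit verification of the remaining hypotheses (connectedness of the lift, preservation of the maximum degree) and of the monotonicity of the bound in $\widehat{D}^2$ is a welcome bit of extra care that the paper leaves implicit.
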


\begin{proof}
    Suppose $f\in V^\C$ is a harmonic eigenfunction for $-\Delta^\sigma$ with eigenvalue $\lambda$. Then by Lemma \ref{lifteigenfunction}, $\widehat{f}\in\widehat{V}^\C$ is a harmonic eigenfunction for $\widehat{\Delta}$, the Laplacian for the lift, with the same eigenvalue. Moreover, it satisfies $CD(n,\kappa)$ by Lemma \ref{localliftofcd}. So, by Theorem \ref{chungyauestimate} and Lemma \ref{liftdiameter}, it holds
        \[\lambda\geq\frac{1+4\kappa d \widehat{D}^2}{d(8-\frac{2}{n})\widehat{D}^2}\geq\frac{1+4\kappa d \left(2D +\ell g^\sigma\right)^2}{d(8-\frac{2}{n})\left(2 +\ell g^\sigma\right)^2}. \]
\end{proof}

In this application, we use the preceding eigenvalue estimate to find a lower bound on the magnetic Cheeger number. The context is the work by Lange, Liu, Peyerimhoff, and Post\cite{LLPP-15}, whose estimate of the magnetic Cheeger number provides the link between the Cheeger number and the least eigenvalue of $\Delta^\sigma$. We note that in their approach, the vertex set of the graph is weighted and the signature takes values in an arbitrary group. Here we consider the case where the vertices are weighted according to their degrees, and the signature group is cyclic.

\begin{defn}
    Suppose $(G,\sigma)$ is a simple magnetic graph with signature in a cyclic group $S^1_\ell$,  and $V_1\subset V$ is nonempty with $(V_1,E_1)$ its induced subgraph. Then we define the frustration index of $V_1$, denoted $\iota^\sigma(V_1)$ to be
        \[\iota^\sigma(V_1):=\min_{\tau:V_1\rightarrow S^1_\ell}\sum_{\{x,y\}\in E_1}p_{xy}|\tau(x)-\sigma_{xy}\tau(y)|.\]
\end{defn}

The frustration index can be thought of as a measure of the balancedness of the subset $V_1$. This can be made explicit, as discussed in \cite{LLPP-15} in the context of the historical work by Harary who is widely considered to have been the first to formalize signed graphs (in the setting of signed social networks\cite{CH-56}). In particular, if $\sigma$ is taking values in $S^1_2$, i.e. the $\pm 1$ group, then
    \[\iota^\sigma(V)=2e^\sigma_{\text{min}}(V),\]
where $e^\sigma_{\text{min}}(V)$ is the minimum number of edges needed to be removed from $G$ to make it balanced.

\begin{defn}\label{cheegerno}
    Suppose $(G,\sigma)$ is a simple magnetic graph with signature in a cyclic group $S^1_\ell$. The magnetic Cheeger number $h_1^\sigma$ is defined by
        \[h_1^\sigma :=\min_{\varnothing\neq V_1\subset V}\frac{\iota^\sigma(V_1)+\sum_{\{x,y\}\in E(V_1,V_1^c)}p_{xy}}{\sum_{x\in V_1}d_x},\]
    where $E(V_1,V_1^c):=\left\{\{x,y\}\in E:x\in V_1,y\notin V_1\right\}$.
\end{defn}

Lange, Liu, Peyerimhoff, and Post\cite[Thm. 4.1]{LLPP-15} proved the following Cheeger inequality relating $h_1^\sigma$ to $\Delta^\sigma$.

\begin{theorem}[Cheeger's Inequality]
    Suppose $(G,\sigma)$ is a simple magnetic graph and $\sigma$ takes values in a finite cyclic group $S^1_\ell$. Let $\lambda$ be the least eigenvalue of $-\Delta^\sigma$, and $d$ the maximum degree in the graph. Then
        \[\frac{1}{2}\lambda\leq h_1^\sigma\leq2\sqrt{2d\lambda}.\]
\end{theorem}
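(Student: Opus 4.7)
The plan is to establish the two sides of the Cheeger inequality separately, with the lower bound on $h_1^\sigma$ being the easy variational direction and the upper bound requiring the genuine Cheeger-type co-area construction.

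For the lower bound $\tfrac{1}{2}\lambda \leq h_1^\sigma$, I would start from the Rayleigh characterization
\[\lambda = \min_{f\in V^\C\setminus\{0\}} \frac{\sum_{\{x,y\}\in E} p_{xy}|f(x)-\sigma_{xy}f(y)|^2}{\sum_{x\in V} d_x|f(x)|^2}.\]
Taking a minimizer $(V_1,\tau)$ of the magnetic Cheeger ratio, I would plug in the test function $f = \tau\cdot\mathbf{1}_{V_1}$ (extended by zero on $V_1^c$). The numerator then decomposes as $\sum_{\{x,y\}\in E_1} p_{xy}|\tau(x)-\sigma_{xy}\tau(y)|^2 + \sum_{\{x,y\}\in E(V_1,V_1^c)} p_{xy}$; I would bound the first sum by $2\iota^\sigma(V_1)$ using the trivial inequality $|z|^2 \leq 2|z|$, valid whenever $|z|\leq 2$, which applies here because $|\tau(x)|=|\sigma_{xy}\tau(y)|=1$. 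Since the denominator equals $\sum_{x\in V_1}d_x$, the resulting estimate yields $\lambda \leq 2h_1^\sigma$ after taking the minimizing subset.

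For the upper bound $h_1^\sigma \leq 2\sqrt{2d\lambda}$, I would take an eigenfunction $f$ of $-\Delta^\sigma$ for $\lambda$ and exploit the pointwise identity
\[|f(x)-\sigma_{xy}f(y)|^2 = \bigl(|f(x)|-|f(y)|\bigr)^2 + |f(x)|\,|f(y)|\,|\tau(x)-\sigma_{xy}\tau(y)|^2, \qquad \tau(z):=f(z)/|f(z)|,\]
which cleanly separates the radial and angular contributions to the magnetic energy of $f$. Next I would apply a co-area / layer-cake argument to the superlevel sets $V_t = \{x : |f(x)|^2 > t\}$ of $|f|^2$: the classical Cauchy--Schwarz trick on $\sum p_{xy}\bigl(|f(x)|^2-|f(y)|^2\bigr) = \sum p_{xy}(|f(x)|-|f(y)|)(|f(x)|+|f(y)|)$ turns the Dirichlet-type estimate $\sum p_{xy}(|f(x)|-|f(y)|)^2 \leq \lambda\sum d_x|f(x)|^2$ into a linear-in-$|f|^2$ boundary estimate, producing some level set $V_t$ whose edge boundary is small relative to $\sum_{V_t} d_x$; integrating the angular term $p_{xy}|f(x)||f(y)||\tau(x)-\sigma_{xy}\tau(y)|^2$ against $dt$ would in parallel yield control of the frustration $\iota^\sigma(V_t)$ using the same $\tau$.

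The main obstacle, and the technical heart of the argument, is that the phase $\tau(x)=f(x)/|f(x)|$ generically takes values in $S^1$ rather than in the discrete cyclic group $S^1_\ell$, whereas $\iota^\sigma(V_t)$ is defined as a minimum over maps $\tau:V_t\to S^1_\ell$. Reconciling the two requires either first proving a continuous Cheeger inequality with $\tau:V_1\to S^1$ and then quantizing the optimal continuous $\tau$ into an $S^1_\ell$-valued configuration at controlled cost, or else carrying out the co-area decomposition jointly in the spatial and phase variables by partitioning vertices according to a binning of $\arg\tau$ against the $\ell$-th roots of unity. I would follow the treatment of \cite{LLPP-15} for this phase-quantization step, which also explains the slack in the factor $2\sqrt{2d\lambda}$ as compared with the non-magnetic bound.
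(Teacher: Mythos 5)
The first thing to say is that the paper does not prove this statement at all: it is quoted from Lange--Liu--Peyerimhoff--Post \cite[Thm.~4.1]{LLPP-15}, so there is no in-paper argument to compare yours against, and your sketch follows the same route as that cited source. Judged on its own terms, your lower-bound argument is complete and correct: the least eigenvalue of $-\Delta^\sigma$, which is self-adjoint and positive semidefinite with respect to the degree-weighted inner product, is exactly the minimum of $\sum_{\{x,y\}\in E}p_{xy}|f(x)-\sigma_{xy}f(y)|^2\big/\sum_{x}d_x|f(x)|^2$ with no orthogonality constraint; the test function $\tau\cdot\mathbf{1}_{V_1}$, with $\tau$ an optimal switching function for $\iota^\sigma(V_1)$ and $V_1$ an optimal subset, has numerator $\sum_{E_1}p_{xy}|\tau(x)-\sigma_{xy}\tau(y)|^2+\sum_{E(V_1,V_1^c)}p_{xy}$ and denominator $\sum_{x\in V_1}d_x$; and $|z|^2\le 2|z|$ for $|z|\le 2$ closes the estimate to give $\lambda\le 2h_1^\sigma$. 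That is the standard proof of the easy direction and nothing is missing from it.

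The upper bound, however, is only an outline with two substantive pieces left open. Your radial/angular identity is correct (modulo defining $\tau$ arbitrarily at zeros of $f$), and superlevel sets plus co-area is the right skeleton, but (i) you do not carry out the computation that combines the boundary contribution of $(|f(x)|-|f(y)|)^2$ with the angular contribution $|f(x)|\,|f(y)|\,|\tau(x)-\sigma_{xy}\tau(y)|^2$ into a single ratio of the form $(\iota^\sigma(V_t)+\mathrm{cut}(V_t))/\mathrm{vol}(V_t)$ for some level $t$, and in particular you do not account for the fact that $\iota^\sigma$ is \emph{linear} rather than quadratic in $|\tau(x)-\sigma_{xy}\tau(y)|$, which changes the Cauchy--Schwarz step relative to the unsigned case and is where the factor of $d$ in $2\sqrt{2d\lambda}$ has to come from; and (ii) the phase-quantization step reducing $S^1$-valued switching functions to $S^1_\ell$-valued ones, which you rightly identify as the technical heart, is simply cited rather than proved. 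Since the paper itself only quotes the theorem, deferring to \cite{LLPP-15} is consistent with its level of detail, but as a self-contained proof your proposal establishes only $\tfrac{1}{2}\lambda\le h_1^\sigma$ and sketches, without completing, $h_1^\sigma\le 2\sqrt{2d\lambda}$.
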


Now in conjunction with the eigenvalue estimate in Theorem \ref{magnetic-eigenvalue-estimate}, and the preceding, we obtain

\begin{theorem}
    Suppose $(G,\sigma)$ is a connected, simple, unabalanced magnetic graph with entire signature $\sigma$ taking values in a cyclic group $S^1_\ell$, $\ell\geq 2$, satisfying $CD^\sigma(n,\kappa)$. Let $G$ have diameter $D$, maximum degree $d$, and magnetic girth $g^\sigma<\infty$. Then
        \[\frac{1+4\kappa d \left(2D +\ell g^\sigma\right)^2}{d(16-\frac{4}{n})\left(2D +\ell g^\sigma\right)^2}\leq h_1^\sigma.\]
\end{theorem}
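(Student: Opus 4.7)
The plan is to obtain the inequality by a direct chaining of the two preceding results, since all of the substantive work has already been carried out. Specifically, I would apply the lower half of Cheeger's inequality, namely $\tfrac{1}{2}\lambda \leq h_1^\sigma$, where $\lambda$ denotes the least eigenvalue of $-\Delta^\sigma$, and then invoke the eigenvalue estimate of Theorem \ref{magnetic-eigenvalue-estimate} to replace $\lambda$ by the explicit curvature/geometric bound involving $n$, $\kappa$, $d$, $D$, $\ell$, and $g^\sigma$.

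In more detail, under the hypotheses of the theorem, the least eigenvalue $\lambda$ of $-\Delta^\sigma$ is nontrivial (since $(G,\sigma)$ is unbalanced and has entire signature, so $\widehat{G}$ is connected and the lifted eigenproblem has no zero mode in $W$ other than the lifts of $0$). Applying Theorem \ref{magnetic-eigenvalue-estimate} to this $\lambda$ gives
    \[\lambda\geq\frac{1+4\kappa d \left(2D +\ell g^\sigma\right)^2}{d(8-\tfrac{2}{n})\left(2D +\ell g^\sigma\right)^2},\]
and then dividing through by $2$ and using Cheeger's inequality in the form $h_1^\sigma\geq \tfrac{1}{2}\lambda$ yields
    \[h_1^\sigma\geq \frac{1}{2}\lambda \geq \frac{1+4\kappa d \left(2D +\ell g^\sigma\right)^2}{d(16-\tfrac{4}{n})\left(2D +\ell g^\sigma\right)^2},\]
which is precisely the bound asserted. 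The factor of $2$ in Cheeger's inequality is exactly what turns the denominator $8-\tfrac{2}{n}$ into $16-\tfrac{4}{n}$.

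There is no real obstacle in this argument beyond invoking the correct theorems in the correct order; the main thing to be careful about is to check that the hypotheses of Theorem \ref{magnetic-eigenvalue-estimate} and of the Cheeger inequality of Lange--Liu--Peyerimhoff--Post align with those of the statement, in particular the assumption that $\sigma$ is entire with $\ell\geq 2$ and that $g^\sigma<\infty$, which together guarantee that the eigenvalue estimate is meaningful (and that $\widehat{D}\leq 2D+\ell g^\sigma$ via Lemma \ref{liftdiameter}). Once these hypotheses are checked, the proof reduces to a one-line chain of inequalities.
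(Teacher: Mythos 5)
Your proposal is correct and matches the paper's own (implicit) argument exactly: the result is obtained by chaining the lower half of the Cheeger inequality, $h_1^\sigma\geq\tfrac{1}{2}\lambda$, with the eigenvalue bound of Theorem \ref{magnetic-eigenvalue-estimate}, the factor of $2$ converting $8-\tfrac{2}{n}$ into $16-\tfrac{4}{n}$. Your additional remark that unbalancedness guarantees the least eigenvalue is nontrivial is a sensible hypothesis check that the paper leaves unstated.
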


\bibliography{references}
\bibliographystyle{plainnat}
    
\end{document}